\newtheorem{theorem}{Theorem}[section]
\newtheorem{lemma}[theorem]{Lemma}
\newtheorem{conjecture}[theorem]{Conjecture}
\newtheorem{claim}[theorem]{Claim}
\newtheorem{fact}[theorem]{Fact}
\newcommand{\ma}{\mathcal}
\newcommand{\mr}{\mathscr}
\newcommand{\s}{\subseteq}
\newcommand{\fr}{\frac}
\newcommand{\lc}{\lceil}
\newcommand{\rc}{\rceil}
\newcommand{\lf}{\lfloor}
\newcommand{\rf}{\rfloor}
\begin{document}

\title{New Tur\'an exponents for two extremal hypergraph problems}
\author{Chong Shangguan\footnote{Research Center for Mathematics and Interdisciplinary Sciences, Shandong University,
Qingdao 266237, China. Email address: theoreming@163.com. This work was accomplished while the author was a postdoc at Tel Aviv University.}  and Itzhak Tamo\footnote{Department of Electrical Engineering - Systems, Tel Aviv University, Tel Aviv 6997801, Israel. Email address: zactamo@gmail.com.}}
\date{}
\maketitle

\begin{abstract}
An $r$-uniform hypergraph is called $t$-cancellative if for any $t+2$ distinct edges $A_1,\ldots,A_t,B,C$, it holds that $(\cup_{i=1}^t A_i)\cup B\neq (\cup_{i=1}^t A_i)\cup C$.
It is called $t$-union-free if for any two distinct subsets $\ma{A},\ma{B}$, each consisting of at most $t$ edges, it holds that $\cup_{A\in\ma{A}} A\neq \cup_{B\in\ma{B}} B$. Let $C_t(n,r)$ (resp. $U_t(n,r)$) denote the maximum number of edges of a $t$-cancellative (resp. $t$-union-free) $r$-uniform hypergraph on $n$ vertices. 
Among other results, we show that for fixed $r\ge 3,t\ge 3$ and $n\rightarrow\infty$
$$\Omega(n^{\lf\frac{2r}{t+2}\rf+\frac{2r\pmod{t+2}}{t+1}})=C_t(n,r)=O(n^{\lceil\frac{r}{\lf t/2\rf+1}\rceil})\text{ and } \Omega(n^{\fr{r}{t-1}})=U_t(n,r)=O(n^{\lc\fr{r}{t-1}\rc}),$$
thereby significantly narrowing the gap between the previously known lower and upper bounds.
In particular, we determine the Tur\'an exponent of $C_t(n,r)$ when $2\mid t \text{ and } (t/2+1)\mid r$, and of $U_t(n,r)$ when $(t-1)\mid r$.

The main tool used in proving the two lower bounds is a novel connection between these problems and  sparse hypergraphs.
\end{abstract}

\section{Introduction}

 A hypergraph $\ma{H}$ on {\it vertex} set $[n]:=\{1,\ldots,n\}$ is simply a family of distinct subsets of $[n]$,  called {\it edges} of $\ma{H}$. If each edge is of fixed size $r$, then $\ma{H}$ is said to be  {\it $r$-uniform} or an {\it $r$-graph}. For a positive integer $t$, $\ma{H}$ is called {\it $t$-cancellative} if for any $t+2$ distinct edges $A_1,\ldots,A_t,B,C\in\ma{H}$, it holds that
$$(\cup_{i=1}^t A_i)\cup B\neq (\cup_{i=1}^t A_i)\cup C.$$
Furthermore, $\ma{H}$ is called {\it $t$-union-free} if for any two distinct subsets of edges $\ma{A},\ma{B}\s\ma{H}$, with $1\le |\ma{A}|,|\ma{B}|\le t$, it holds that
$$\cup_{A\in\ma{A}} A\neq \cup_{B\in\ma{B}} B.$$

\noindent In this  paper we study the maximum size (maximum number of edges) of an $r$-graph that is either {\it $t$-cancellative} or {\it $t$-union-free}.

To the best of our knowledge (see also \cite{Frankl-Furedi-1,Furedi-canc}), the study of 1-cancellative  and 2-union-free hypergraphs (not necessarily uniform) were initiated by Erd\H{o}s and Katona \cite{Erdos-Katona} and Erd\H{o}s and Moser \cite{Erdos-Moser}, respectively. Notice that the question of $1$-union-free is trivial, since any hypergraph is $1$-union-free. The study of the closely related problem of {\it weakly union-free families} dates back to the work of Erd\H{o}s \cite{Erdos-weak} in 1938. Later, the general questions of  $t$-cancellative and $t$-union-free hypergraphs were first considered by F\"uredi \cite{Furedi-canc} (for 2-cancellative hypergraphs, see also \cite{Korner-2-canc}) and Kautz and Singleton \cite{kautz1964nonrandom}, respectively.

Not much is known about these problems for $r$-graphs and $t\ge 3$, besides this work. In fact, most of the known results were derived by translating results from the extensive literature on cover-free hypergraphs to results on cancellative and union-free hypergraphs. Next, we  define cover-free hypergraphs and present the  simple observations that facilitate the translation of the results.

An $r$-graph $\ma{H}$ is called {\it $t$-cover-free} by Erd\H{o}s, Frankl and F\"uredi \cite{Erdos-cff} (see also \cite{kautz1964nonrandom} for a different terminology), if for any $t+1$ distinct edges $A_1,\ldots,A_t,B\in\ma{H}$, it holds that $$B\nsubseteq \cup_{i=1}^t A_i.$$ In the literature, the following observations are well-known.

\begin{itemize}
  \item [$(a)$] If $\ma{H}$ is $(t+1)$-cover-free then it is also $t$-cancellative; if $\ma{H}$ is $t$-cancellative then by removing from it at most $1+\lf\fr{t}{2}\rf$ edges one gets a subhypergraph of $\ma{H}$, that is $\lf\fr{t}{2}\rf$-cover-free;
  \item [$(b)$] If $\ma{H}$ is $t$-cover-free then it is also $t$-union-free; if $\ma{H}$ is $t$-union-free then it is also $(t-1)$-cover-free.
\end{itemize}

\noindent These observations follow straightforwardly from the definitions above (see Theorem 3.2 of \cite{Furedi-canc} for the proof of the second claim  of $(a)$).

Below we assume that $r,t$ are fixed integers and $n$ tends to infinity. Let $F_t(n,r),C_t(n,r)$ and $U_t(n,r)$ denote the maximum size of $t$-cover-free,  $t$-cancellative and $t$-union-free $r$-graphs on $n$ vertices, respectively. Frankl and F\"uredi \cite{Frankl-Furedi-3} showed that for fixed $r,t$, it holds that

\begin{equation}\label{cff-frankl-furedi}
    F_t(n,r)=\big(\gamma(r,t)+o(1)\big)n^{\lc\fr{r}{t}\rc},
\end{equation}

\noindent where $\gamma(r,t)$ is a constant depending only on $r,t$ and $o(1)\rightarrow 0$ as $n\rightarrow\infty$. Then by observations  $(a)$, $(b)$, and \eqref{cff-frankl-furedi} (see (10) in \cite{Furedi-grids} and (4.2) in \cite{Furedi-canc}) we have

\begin{equation}\label{eq-cff-bound}
  \Omega(n^{\lc\fr{r}{t+1}\rc})=C_t(n,r)=O(n^{\lc\fr{r}{\lf t/2\rf}\rc})\text{\quad and\quad} \Omega(n^{\lc\fr{r}{t}\rc})=U_t(n,r)=O(n^{\lc\fr{r}{t-1}\rc}).
\end{equation}

\noindent In this paper we only study the exponents of $n$ in $C_t(n,r)$ and $U_t(n,r)$.
If $C_t(n,r)=\Theta(n^{\alpha})$ for some  fixed  number $\alpha>0$, then $\alpha$ is called the  {\it Tur\'an exponent} of $C_t(n,r)$.
The Tur\'an exponent of  $U_t(n,r)$ is defined similarly.
 Clearly, in \eqref{eq-cff-bound}, the upper and lower bounds on the exponents of $n$ are far from being tight, and in this paper we take another step towards bridging them.

In the literature, there are only a handful of results that improve on \eqref{eq-cff-bound}. For cancellative hypergraphs, it is known that $\fr{0.28}{2^r}\binom{n}{r}<C_1(n,r)\le\fr{2^r}{\binom{2r}{r}}\binom{n}{r}$ (see \cite{Frankl-Furedi-1} for the upper bound and \cite{tolhuizen2000new} for the lower bound), and \cite{Furedi-canc} showed that $\Omega(n^{\lf\fr{r}{2}\rf})=C_2(n,r)=O(n^{\lc\fr{r}{2}\rc})$ and $n^{2-o(1)}<C_2(n,3)=O(n^2)$ . For union-free hypergraphs, it is known that $U_2(n,r)=\Theta(n^{\lc 4r/3\rc/2})$ \cite{Frankl-Furedi-2} and $n^{2-o(1)}<U_r(n,r)=O(n^2)$ \cite{Furedi-grids}.

By the discussion above, it is clear that the Tur\'an exponents of $C_t(n,r)$ are known for $t=1$ and $t=2,~2\mid r$, and  the Tur\'an exponents of $U_t(n,r)$ are only known for $t=2$. In this paper we present new constructions which considerably narrow the gaps between the upper and lower bounds of \eqref{eq-cff-bound}, as stated next.

\subsection{$t$-cancellative $r$-graphs}

\begin{theorem}\label{theorem-canc}
For fixed integers $r\ge 3, t\ge 3$ and $n\rightarrow\infty$, it holds that

\begin{equation}\label{eq-canc}
 \Omega(n^{\lf\frac{2r}{t+2}\rf+\frac{2r\pmod{t+2}}{t+1}})=C_t(n,r)=O(n^{\lceil\frac{r}{\lfloor t/2\rfloor+1}\rceil}).
\end{equation}

\noindent Moreover, if $\gcd(2r-\lc\fr{2r-t-1}{t+2}\rc,t+1)=1$, then $C_t(n,r)=\Omega\big(n^{\lf\frac{2r}{t+2}\rf+\frac{2r\pmod{t+2}}{t+1}}(\log n)^{\frac{1}{t+1}}\big)$.
\end{theorem}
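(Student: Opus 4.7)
The proof divides into an upper bound argument and a lower bound construction.

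\emph{Upper bound.} I plan to sharpen observation~$(a)$ by one step. Set $s := \lfloor t/2\rfloor + 1$. I claim that in any $t$-cancellative $r$-graph $\ma{H}$ with $|\ma{H}|\ge t+2$, for every choice of $s$ distinct edges $D_1,\ldots,D_s\in\ma{H}$, at most one further edge of $\ma{H}$ is contained in $\bigcup_{i=1}^s D_i$. Otherwise two such extra edges $B, C$, together with the $D_i$'s and any $t-s$ further distinct edges $A_1,\ldots,A_{t-s}\in\ma{H}$, would form $t+2$ distinct edges satisfying
\[
\Big(\bigcup_{i=1}^s D_i\Big) \cup \Big(\bigcup_{j=1}^{t-s} A_j\Big) \cup B \;=\; \Big(\bigcup_{i=1}^s D_i\Big) \cup \Big(\bigcup_{j=1}^{t-s} A_j\Big) \cup C,
\]
violating $t$-cancellativity. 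From this near-$s$-cover-free property one either extracts a genuinely $s$-cover-free subhypergraph (up to a small additive loss) or runs a modified Frankl--F\"uredi shadow count directly, and invoking the bound $F_s(n,r) = O(n^{\lceil r/s\rceil})$ of~\eqref{cff-frankl-furedi} then yields $C_t(n,r) = O(n^{\lceil r/(\lfloor t/2\rfloor+1)\rceil})$.

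\emph{Lower bound.} The target exponent $\alpha := \lfloor 2r/(t+2)\rfloor + (2r\bmod (t+2))/(t+1)$ can be rewritten as $(2r-\lfloor 2r/(t+2)\rfloor)/(t+1)$, which is precisely the Brown--Erd\H{o}s--S\'os exponent $((t+2)r-v_0)/(t+1)$ for the pair $(v_0,t+2)$ with $v_0 := rt+\lfloor 2r/(t+2)\rfloor$. This suggests the novel connection with sparse hypergraphs mentioned in the abstract: any forbidden $t$-cancellative $(t+2)$-tuple $(A_1,\ldots,A_t,B,C)$ spans $v = |\bigcup A_i| + |B\setminus\bigcup A_i|\le (t+1)r-1$ vertices, so an $r$-graph with no $t+2$ edges on $\le v_0$ vertices has only potentially forbidden configurations in the range $v_0 < v \le (t+1)r-1$. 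The construction then has two stages: (i) a R\"odl-nibble construction of a $(v_0+1, t+2)$-Brown--Erd\H{o}s--S\'os-sparse $r$-graph with $\Omega(n^{\alpha})$ edges, and (ii) an alteration step deleting one edge per surviving forbidden configuration, which removes only $o(n^{\alpha})$ edges because configurations with $v > v_0$ are strictly rarer in the sparse construction. Under the arithmetic condition $\gcd(2r-\lceil(2r-t-1)/(t+2)\rceil,\, t+1) = 1$, a sharper nibble analysis yields an extra $(\log n)^{1/(t+1)}$ factor, as in classical constructions of near-perfect partial Steiner systems.

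\emph{Main obstacle.} The heart of the theorem is the sparse-hypergraph connection. The naive Brown--Erd\H{o}s--S\'os bound, which treats forbidden configurations uniformly via the maximal vertex count $v_{\max} = (t+1)r-1$, gives only $\Omega(n^{(r+1)/(t+1)})$, substantially weaker than the target $\Omega(n^{\alpha})$. The key difficulty is pinning down the critical layer $v_0 = rt+\lfloor 2r/(t+2)\rfloor$ as the exponent-determining stratum, verifying that higher-$v$ survivors of the nibble are indeed negligible, and aligning the arithmetic gcd condition with the nibble's density increment in order to harvest the $(\log n)^{1/(t+1)}$ factor.
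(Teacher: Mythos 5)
Your lower-bound strategy correctly identifies the critical sparsity pair $(v_0,t+2)$ with $v_0=tr+\lceil\frac{2r-t-1}{t+2}\rceil=tr+\lfloor\frac{2r}{t+2}\rfloor$, but the step that is supposed to dispose of forbidden configurations spanning more than $v_0$ vertices is based on a false premise. In the alteration/random construction with $\Omega(n^{\alpha})$ edges, the expected number of $(t+2)$-tuples of edges spanning $v$ vertices grows like $n^{v+(t+2)(\alpha-r)}=n^{\alpha+(v-v_0)}$; configurations with $v>v_0$ are therefore \emph{more} numerous, not ``strictly rarer,'' and deleting one edge per such configuration destroys the construction. The missing idea is that one should not bound the span of a cancellativity violation crudely by $(t+1)r-1$, but instead impose a \emph{second} sparsity condition, namely $\mr{G}_r(2r-x-1,2)$-freeness with $x=\lceil\frac{2r-t-1}{t+2}\rceil$ (i.e.\ any two edges meet in at most $x$ vertices). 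Since a violation forces $(B\cup C)\setminus(B\cap C)\subseteq\bigcup_i A_i$, the bound $|B\cap C|\le x$ yields $|(\bigcup_i A_i)\cup B\cup C|\le tr+x=v_0$, so \emph{every} genuine violation lives on at most $v_0$ vertices and a hypergraph that is simultaneously $\mr{G}_r(tr+x,t+2)$-free and $\mr{G}_r(2r-x-1,2)$-free is already $t$-cancellative (this is the paper's Lemma~\ref{lemma-0}). One then checks that the exponent of the pairwise-intersection condition, $x+1$, is at least $\alpha=\frac{2r-x}{t+1}$, so both conditions can be achieved simultaneously with $\Omega(n^{\alpha})$ edges; the $(\log n)^{1/(t+1)}$ gain comes from a known refinement of the probabilistic lemma (requiring the gcd condition and strict inequality between the two exponents), not from a nibble.

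Your upper bound also has a gap. The structural property you prove — any union of $s=\lfloor t/2\rfloor+1$ edges contains at most one further edge — is correct, but it does not reduce to the Frankl--F\"uredi bound $F_s(n,r)=O(n^{\lceil r/s\rceil})$: the number of ``covered'' edges can be as large as the number of $s$-tuples, so you cannot extract a genuinely $s$-cover-free subhypergraph by discarding a bounded number of edges, and the ``modified shadow count'' is not supplied. The paper's argument is different and self-contained: after reducing to $r=(s)k$ with $s=t'+1$ for $2t'$-cancellative graphs, it cleans the hypergraph so every $k$-subset has codegree $0$ or at least $3$ (Lemma~\ref{naivelemma2}), then finds two edges $C,D$ sharing a $k$-subset $X$ and covers $C\setminus X$ and $D\setminus X$ by $t'$ further edges each; the resulting $2t'+2$ edges violate cancellativity. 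The essential point your approach misses is that the two distinguished edges $B,C$ in the definition need only have their symmetric difference covered, and exploiting a shared $k$-set is what buys the extra ``$+1$'' in the denominator.
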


Note that by \eqref{eq-canc} for even $t$ and $r$ divisible by $t/2+1$ the  Tur\'an exponent of $C_t(n,r)$ is known.  We speculate that for any  $t,r\geq 3$ the upper bound of \eqref{eq-canc} can be further improved to $C_t(n,r)=O(n^{\lceil\frac{2r}{t+2}\rceil})$. In particular, it would be interesting to determine whether $C_3(n,5)=O(n^2)$. If our guess is correct, then the lower bound of \eqref{eq-canc} gives the Tur\'an exponent of $C_t(n,r)$ for all $r,t$ satisfying $(t+2)\mid 2r$.

For $(t+2)\nmid 2r$, it may be a difficult problem to determine the asymptotic order of $C_t(n,r)$. For example, F\"uredi \cite{Furedi-canc} conjectured that (see Conjecture 12.1 of \cite{Furedi-canc})
\begin{equation}\label{conj-canc}
    n^{k+1-o(1)}<C_2(n,2k+1)=o(n^{k+1})
\end{equation}
for all fixed $k\ge 1$. Moreover, he showed that for $k=1$, the conjectured upper bound $C_2(n,3)=o(n^2)$ is closely related to a celebrated and longstanding conjecture of Brown, Erd\H{o}s and S\'os \cite{Brown-Erdos-Sos-1971} in extremal graph theory (see Section \ref{section-sparse} below for the details). We are able to verify the lower bound part of F\"uredi's conjecture.

\begin{theorem}\label{2-cancellative}
  For any fixed integer $k\ge 1$ and $n\rightarrow\infty$, it holds that $C_2(n,2k+1)>n^{k+1-o(1)}$.
\end{theorem}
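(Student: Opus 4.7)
The plan is to generalize the Ruzsa--Szemer\'edi construction (which proves $C_2(n,3) \ge n^{2-o(1)}$, the case $k=1$) to arbitrary $k$ by combining an algebraic polynomial framework with a Behrend-type additive combinatorics ingredient. Partition the vertex set into $2k+1$ disjoint copies $V_0, \ldots, V_{2k}$ of $\mathbb{Z}_M$, so that $n=(2k+1)M$. For each polynomial $p(X) \in \mathbb{Z}_M[X]$ of degree at most $k$ whose coefficient vector lies in a carefully chosen set $S \subseteq \mathbb{Z}_M^{k+1}$, I would take the transversal edge $E_p = \{(j, p(j)) : 0 \le j \le 2k\}$, viewing $(j,p(j))$ as the element $p(j) \in V_j$. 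Two distinct polynomials of degree at most $k$ disagree at at least $k+1$ of the $2k+1$ evaluation points, so this yields $|S|$ distinct $(2k+1)$-uniform edges.

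Next, I would extract what is imposed by a violation of $2$-cancellativity. If four distinct polynomials $p_1,p_2,q,r$ produce a forbidden configuration $E_q \triangle E_r \subseteq E_{p_1}\cup E_{p_2}$, then at every index $j$ with $q(j)\neq r(j)$ one has $\{p_1(j),p_2(j)\} = \{q(j),r(j)\}$. Since $q\neq r$ disagree at $\ge k+1$ such indices and the polynomial $p_1+p_2-q-r$ has degree at most $k$, it must vanish identically, yielding the additive relation $\vec{p_1}+\vec{p_2} = \vec{q}+\vec{r}$ in $\mathbb{Z}_M^{k+1}$. A second, multiplicative-type constraint arises from the fact that $(p_1-q)(p_1-r)$ is forced to have at least $k+1$ roots in $\{0,1,\ldots,2k\}$, while both factors have degree at most $k$.

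The final step is to choose $S$ as a Behrend-type set of size $|S|=M^{k+1-o(1)}$ admitting no quadruple of distinct elements satisfying both constraints; the number of edges is then $n^{k+1-o(1)}$, as desired. The main obstacle is constructing $S$ of the target size. A Sidon set in $\mathbb{Z}_M^{k+1}$ (forbidding every nontrivial $\vec{a}+\vec{b}=\vec{c}+\vec{d}$) would trivially exclude all bad quadruples, but has size only $M^{(k+1)/2}$, well short of what is required. The technical crux is to exploit the multiplicative constraint to show that every bad quadruple must induce a nontrivial $3$-term arithmetic progression in some projection of $S$; once such a reduction is in place, a higher-dimensional extension of Behrend's classical $3$-AP-free set (exactly as used in the case $k=1$) yields $|S|=M^{k+1-o(1)}$ and completes the proof.
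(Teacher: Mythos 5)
Your extraction of the algebraic consequences of a cancellativity violation is sound (over $\mathbb{Z}_M$ with $M$ prime, which you should require so that degree-$\le k$ polynomials have at most $k$ roots), but the proof has a genuine gap exactly where you flag it: the construction of $S$ with $|S|=M^{k+1-o(1)}$. Writing $u=p_1-q$ and $v=p_1-r$, your additive relation says the bad quadruple is the parallelogram $\{\vec{p}_1,\ \vec{p}_1-\vec{u},\ \vec{p}_1-\vec{v},\ \vec{p}_1-\vec{u}-\vec{v}\}$ in $\mathbb{Z}_M^{k+1}$, where $u,v$ are two \emph{independent} nonzero directions constrained only by the vanishing pattern $u(j)v(j)=0$ at the indices where $u(j)\neq v(j)$. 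This is a corner-type (additive-quadruple) configuration, not a three-term progression. To see it as a $3$-AP in ``some projection'' you would need a projection $\phi$ with $\phi(u)=\phi(v)$, i.e.\ $u-v\in\ker\phi$; but the multiplicative constraint does not confine $u-v$ to any fixed hyperplane (for $k=1$ on the points $\{0,1,2\}$, the pair $u=X,\ v=2X-2$ is admissible with $u-v=-X+2$, while $u=X-2,\ v=2X-2$ is admissible with $u-v=-X$), so no single projection kills all bad quadruples, and a quadruple-dependent projection does not yield a Behrend-type construction. The reduction you defer to is therefore not just unproven but, as stated, does not appear to follow from the two constraints you isolate; it is the heart of the argument, not a technicality. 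A genuinely corner-free set of positive-looking density would require the Behrend/corners machinery in a form you have not supplied.

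The paper takes a different and much shorter route that avoids building any explicit construction: Lemma \ref{lemma-10} shows by a pigeonhole argument that any $(2k+1)$-graph that is $\mr{G}_{2k+1}(4k+2,3)$-free (no $4k+2$ vertices spanning three edges) is automatically $2$-cancellative, and the bound $f_{2k+1}(n,4k+2,3)>n^{k+1-o(1)}$ is then quoted from Alon and Shapira. In effect, the hard Behrend-type work you are trying to redo is outsourced to that known result on sparse hypergraphs; if you want a self-contained direct construction, you would essentially have to reprove the Alon--Shapira lower bound, whose construction is more delicate than a coordinatewise polynomial/Behrend hybrid.
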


\subsection{$t$-union-free $r$-graphs}

\begin{theorem}\label{theorem-uf}
For fixed integers $r\ge3, t\ge 3$ and $n\rightarrow\infty$, we have that

\begin{equation}\label{eq-uf}
 \Omega(n^{\fr{r}{t-1}})=U_t(n,r)=O(n^{\lc\fr{r}{t-1}\rc}).
\end{equation}

\noindent Moreover, if $\gcd(r,t-1)=1$, then $U_t(n,r)=\Omega\big(n^{\fr{r}{t-1}}(\log n)^{\frac{1}{t-1}}\big)$.
\end{theorem}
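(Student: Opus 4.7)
The upper bound is immediate from observation $(b)$: every $t$-union-free $r$-graph is $(t-1)$-cover-free, so \eqref{cff-frankl-furedi} gives $U_t(n,r)\le F_{t-1}(n,r)=O(n^{\lc r/(t-1)\rc})$. The substance of the theorem lies in the lower bound, which exploits the novel connection to sparse hypergraphs advertised in the abstract.

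The plan is first to isolate a sparseness condition on an $r$-graph $\ma{H}$ that is sufficient for $t$-union-freeness. Starting from a hypothetical violation -- distinct subfamilies $\ma{A}\neq\ma{B}$ of $\ma{H}$, each of size at most $t$, with common union $U$ -- one first observes that $(t-1)$-cover-freeness (which one intends to impose on $\ma{H}$) forces $|\ma{A}|=|\ma{B}|=t$ with neither family contained in the other: otherwise some edge in $\ma{A}\triangle\ma{B}$ would be covered by at most $t-1$ other edges, contradicting $(t-1)$-cover-freeness. Writing $c:=|\ma{A}\cup\ma{B}|$, one then obtains $c\in\{t+1,\ldots,2t\}$ distinct edges contained in a common vertex set of size at most $rt$, with the additional structural feature that at least two of them lie in the union of the remaining $c-1$. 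The condition to impose on $\ma{H}$ is the absence of such ``sparse-plus-double-cover'' $c$-configurations for every $c\in\{t+1,\ldots,2t\}$, together with $(t-1)$-cover-freeness.

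Given this lemma, the next step is to exhibit an $r$-graph of size $\Omega(n^{r/(t-1)})$ satisfying the stated condition. Here the connection to sparse hypergraphs becomes quantitative: the small-span ingredient is of Brown-Erd\H{o}s-S\'os type, but the additional ``two-covers'' structure restricts the count of forbidden configurations enough to make the target exponent $n^{r/(t-1)}$ (rather than the weaker $n^{r/t}$ coming from plain small-span sparseness) achievable. A probabilistic-deletion argument, or alternatively a known extremal construction of suitably sparse $r$-graphs, then produces $\ma{H}$ meeting the hypothesis of the lemma and hence $t$-union-free, with $\Omega(n^{r/(t-1)})$ edges.

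For the improvement to $\Omega(n^{r/(t-1)}(\log n)^{1/(t-1)})$ when $\gcd(r,t-1)=1$, the plan is to replace the one-shot random construction by a semi-random / R\"odl-nibble-type argument: grow $\ma{H}$ in many rounds, each of which picks a thin random batch of currently-legal edges and updates. The coprimality condition $\gcd(r,t-1)=1$ is the standard signal that a unique linear constraint is tight in the underlying exponent optimization -- the regime in which the nibble gains an extra $(\log n)^{1/(t-1)}$ factor. The main obstacle throughout is identifying the precise sparseness condition: it must be strong enough to force $t$-union-freeness, weak enough that its extremal number really is $\Theta(n^{r/(t-1)})$, and clean enough to support the nibble's round-by-round bookkeeping. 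Carrying out the quantitative counting that turns the double-cover structure into the right exponent, together with the concentration estimates that drive the nibble, is where the bulk of the work should lie.
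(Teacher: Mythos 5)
Your upper bound is exactly the paper's (observation $(b)$ combined with \eqref{cff-frankl-furedi}), and your first reduction --- that cover-freeness forces $|\ma{A}|=|\ma{B}|=t$ --- is sound (the paper gets the same conclusion from $\mr{G}_r(sr-r,s)$-freeness, which implies $(s-1)$-cover-freeness). The gap is in the lower bound, precisely where you assert that the ``sparse-plus-double-cover'' structure ``restricts the count of forbidden configurations enough'' to reach density $n^{r/(t-1)}$ by deletion. It does not. Consider the extreme intersecting case $|\ma{A}\cap\ma{B}|=t-1$, i.e.\ $c=t+1$: the forbidden configuration is $t+1$ distinct edges $E_1,E_2,E_3,\ldots,E_{t+1}$ with $E_1\setminus\bigcup_{j\ge3}E_j=E_2\setminus\bigcup_{j\ge3}E_j$ and $E_1,E_2$ each covered by the others. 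The covering constraints confine $E_2$ to $O(1)$ choices once the other $t$ edges are fixed, but they do not reduce the number of free vertices below $tr-1$: one may take $E_3,\ldots,E_{t+1}$ pairwise disjoint and $E_1$ meeting their union in a single vertex, and such genuine violations of $t$-union-freeness span $tr-1$ vertices. Hence in the random model the expected number of these configurations is of order $n^{tr-1}p^{t+1}$, and the deletion method forces $p=O(n^{(r-tr+1)/t})$, i.e.\ at most $O(n^{(r+1)/t})$ edges --- essentially the exponent $r/t$ you set out to beat. No purely counting-based exclusion of the $c=t+1$ configurations can reach $n^{r/(t-1)}$.

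The paper escapes this not by counting but by structure: it first passes to an $r$-partite subhypergraph (Lemma \ref{Erdos-Kleitman}, losing only a constant factor) and then shows (Lemmas \ref{lemma-2} and \ref{lemma-111}) that in an $r$-partite $\mr{G}_r(tr-r,t)$-free $r$-graph every intersecting case $1\le|\ma{A}\cap\ma{B}|\le t-1$ is outright impossible: applying Lemma \ref{lemma-2} to $\ma{A}\cup\{B_{i+1}\}\setminus\{C_1\}$ yields a part $V_j$ on which the $\ma{A}$-side of \eqref{eq-1} has $t$ distinct vertices while the $\ma{B}$-side has at most $t-1$. Only the disjoint case $\ma{A}\cap\ma{B}=\emptyset$ is handled by a sparseness count, namely $\mr{G}_r(tr,2t)$-freeness, whose Brown--Erd\H{o}s--S\'os exponent $\frac{tr}{2t-1}$ strictly exceeds $\frac{r}{t-1}$ for $t\ge 3$; together with the exponent $\frac{r}{t-1}$ of the tight constraint $\mr{G}_r(tr-r,t)$ this is what lets Lemma \ref{probability}$(a)$ deliver $\Omega(n^{r/(t-1)})$ edges. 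Relatedly, the logarithmic gain under $\gcd(r,t-1)=1$ comes from the refined one-shot deletion bound of Lemma \ref{probability}$(b)$ applied to that unique tight constraint, not from a semi-random nibble. Without the $r$-partite device, or some substitute for it, your plan stalls at exponent roughly $r/t$.
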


Note that the upper bound was proved already in \eqref{eq-cff-bound}, whereas the lower bound   $U_t(n,r)=\Omega(n^{\fr{r}{t-1}})$, although not  explicitly stated, was implied by an earlier work of Blackburn (see Theorem 5 in \cite{blackburn}),  on  a problem in multimedia fingerprinting.
The main novelty of Theorem \ref{theorem-uf} compared to  \cite{blackburn}  is the application of sparse hypergraphs in proving this result (which gives a simpler proof), and the  improved lower bound of  $U_t(n,r)=\Omega\big(n^{\fr{r}{t-1}}(\log n)^{\frac{1}{t-1}}\big)$.

Clearly, \eqref{eq-uf} gives the Tur\'an exponent of $U_t(n,r)$ when $(t-1)\mid r$. Similarly, for $(t-1)\nmid r$, the determination of the correct order of $U_t(n,r)$ may be difficult. Indeed, F\"uredi and Ruszink\'o
\cite{Furedi-grids} conjectured that
\begin{equation}\label{conj-un}
    U_r(n,r)=o(n^2)
\end{equation}

\noindent for all fixed $r\ge 3$, where the conjectured upper bound is also related to the aforementioned conjecture of \cite{Brown-Erdos-Sos-1971}.

Recall that as mentioned previously, prior to this paper and \cite{blackburn}, for $t\ge 3$ the only known lower bound of $C_t(n,r)$ (resp. $U_t(n,r)$) was derived from that of $F_t(n,r)$, using \eqref{cff-frankl-furedi} and observation $(a)$ (resp. $(b)$).
Surprisingly, we show that {\it sparse hypergraphs}, as introduced in the next section, can be used as a unified tool to construct $t$-cancellative and $t$-union-free $r$-graphs (see Lemmas \ref{lemma-0}, \ref{lemma-10} and \ref{lemma-111} below for details), thereby providing much better lower bounds for $C_t(n,r)$ and $U_t(n,r)$. 

The remaining part of this paper is organised as follows. In Section \ref{section-sparse} we introduce sparse hypergraphs and in Section \ref{connections} we show how they can be used to prove the lower bounds in Theorems \ref{theorem-canc}, \ref{2-cancellative} and \ref{theorem-uf}. We defer the proof of the upper bound in Theorem \ref{theorem-canc} to Section \ref{upper}. 

{\it Notations.} We will use the standard Bachmann-Landau notations $\Omega(\cdot),\Theta(\cdot),O(\cdot)$ and $o(\cdot)$, whenever the constants are not important. All logarithms are of base 2.

\section{Constructions based on  sparse hypergraphs}\label{section-sparse}
For integers $v\ge r+1, e\ge 2$, an $r$-graph is called {\it $\mr{G}_r(v,e)$-free} if the union of any $e$ distinct edges contains at least $v+1$ vertices. Such $r$-graphs are also called {\it sparse} due to the sparsity of its edges, i.e., any set of $v$ vertices spans less than $e$ edges. Let $f_r(n,v,e)$ denote the maximum number of edges of a $\mr{G}_r(v,e)$-free $r$-graph on $n$ vertices. 
Brown, Erd\H{o}s and S\'os \cite{Brown-Erdos-Sos-1971} showed that
\begin{equation*}\label{BES}
\Omega(n^{\frac{er-v}{e-1}})=f_r(n,v,e)=O(n^{\lceil\frac{er-v}{e-1}\rceil}),
\end{equation*}

\noindent where the lower bound was proved by a probabilistic construction using the {\it alteration method} (see, e.g. \cite{alon2016probabilistic}).
Indeed, a slightly more sophisticated probabilistic analysis leads to the following stronger lower bound. 

\begin{lemma}[see Lemma 1.7 of \cite{Furedi-grids} and Proposition 6 of \cite{shangguan2019universally}]\label{probability}
    Let $s\ge 1, r\ge 3$ and $(v_i,e_i)$ for $1\le i\le s$ be pairs of  integers satisfying $v_i\ge r+1,e_i\ge 2$.
\begin{itemize}
    \item [$(a)$]  Let $h:=\min~\{\frac{e_ir-v_i}{e_i-1}:1\le i\le s\}$. Then there exists an $r$-graph with $\Omega(n^h)$ edges that is simultaneously $\mr{G}_r(v_i,e_i)$-free for each $1\le i\le s$.

    \item [$(b)$] Suppose further that $e_1\ge 3, \gcd(e_1-1,e_1r-v_1)=1$ and $\frac{e_1r-v_1}{e_1-1}<\frac{e_ir-v_i}{e_i-1}$ for each $2\le i\le s$.
    Then there exists an $r$-graph with $\Omega(n^{\frac{e_1r-v_1}{e_1-1}}(\log n)^{\frac{1}{e_1-1}})$ edges that is simultaneously $\mr{G}_r(v_i,e_i)$-free for each $1\le i\le s$.
\end{itemize}
\end{lemma}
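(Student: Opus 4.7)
My plan is to use the probabilistic deletion (alteration) method on a random $r$-graph on $[n]$. For part $(a)$, consider the random $r$-graph $\ma{H}$ obtained by including each $r$-subset of $[n]$ independently with probability $p=c\cdot n^{h-r}$, where $h=\min_{1\le i\le s}(e_ir-v_i)/(e_i-1)$ and $c>0$ is a small constant. Let $X_0$ be the number of edges of $\ma{H}$ and, for each $i$, let $Y_i$ count the $e_i$-subsets of edges of $\ma{H}$ whose union has at most $v_i$ vertices (the forbidden $\mr{G}_r(v_i,e_i)$-configurations). A direct counting yields $\mathbb{E}[X_0]=p\binom{n}{r}=\Theta(n^h)$ and $\mathbb{E}[Y_i]=O(n^{v_i}p^{e_i})=O(n^{v_i+e_i(h-r)})$; since $h\le (e_ir-v_i)/(e_i-1)$ the exponent is $\le h$, so $\mathbb{E}[Y_i]=O(n^h)$. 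Choosing $c$ sufficiently small guarantees $\sum_i\mathbb{E}[Y_i]\le \mathbb{E}[X_0]/2$, hence an instance exists with $X_0-\sum_iY_i\ge \mathbb{E}[X_0]/2=\Omega(n^h)$; deleting one edge from each forbidden configuration produces a graph that is $\mr{G}_r(v_i,e_i)$-free for every $i$ and has $\Omega(n^h)$ edges.

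For part $(b)$, I would push the same strategy with $p=c\cdot n^{h_1-r}(\log n)^{1/(e_1-1)}$, where $h_1=(e_1r-v_1)/(e_1-1)$, targeting $\Omega(n^{h_1}(\log n)^{1/(e_1-1)})$ edges. With this choice, the strict inequality $h_i>h_1$ for $i\ge 2$ guarantees $\mathbb{E}[Y_i]=o(\mathbb{E}[X_0])$, so those types of forbidden configurations are still negligible and can be handled by a routine alteration. The critical issue is $Y_1$: one computes $\mathbb{E}[Y_1]=\Theta(n^{h_1}(\log n)^{e_1/(e_1-1)})$, a $\log n$ factor larger than $\mathbb{E}[X_0]$, so naive edge-deletion destroys the construction. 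Instead, form the auxiliary $e_1$-uniform hypergraph $H^\ast$ whose vertices are the edges of $\ma{H}$ and whose hyperedges are the type-$1$ bad configurations; an independent set in $H^\ast$ yields the desired $r$-graph. The plan is to show that $H^\ast$ is, with positive probability, sufficiently \emph{uncrowded} (a.k.a.\ has bounded multi-covers and no short cycles), and then apply an Ajtai--Komlós--Pintz--Spencer--Szemerédi type theorem on independent sets in uncrowded $e_1$-uniform hypergraphs to gain the $(\log n)^{1/(e_1-1)}$ factor on top of the naive bound.

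The main technical obstacle is verifying the uncrowdedness of $H^\ast$, and this is exactly where the coprimality hypothesis $\gcd(e_1-1,e_1r-v_1)=1$ should enter. Specifically, I would enumerate the possible degenerate overlaps between two type-$1$ configurations -- i.e.\ sub-configurations $(v',e')$ with $2\le e'\le 2e_1-1$ that two distinct bad configurations could share -- and bound their expected number. Each such sub-configuration contributes at most $O(n^{v'}p^{e'})=O(n^{v'+e'(h_1-r)}\cdot(\log n)^{e'/(e_1-1)})$, which is $o(\mathbb{E}[X_0])$ unless $(e'r-v')/(e'-1)=h_1$. The coprimality condition together with $e_1\ge 3$ is used precisely to exclude these equality cases through a short divisibility argument: writing $h_1=a/(e_1-1)$ in lowest terms (which requires $\gcd(e_1-1,e_1r-v_1)=1$), any rival pair $(v',e')$ satisfying $(e'r-v')/(e'-1)=h_1$ would force $(e_1-1)\mid(e'-1)$, and a case analysis inside the range $e'<2e_1-1$ rules this out. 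Once uncrowdedness is established, the AKPSS estimate supplies the extra logarithmic factor, and a final alteration removes the few remaining type-$i$ ($i\ge 2$) configurations.
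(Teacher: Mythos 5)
The paper does not actually prove this lemma --- it is quoted verbatim from Lemma 1.7 of \cite{Furedi-grids} and Proposition 6 of \cite{shangguan2019universally} --- so I am comparing your proposal against the standard arguments in those sources. Your part $(a)$ is correct and is exactly the usual alteration argument: the exponent computation $v_i+e_i(h-r)\le h$ follows from $h\le\frac{e_ir-v_i}{e_i-1}$, and taking $c$ small exploits $e_i\ge 2$ to make the deletion affordable. No complaints there.

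Part $(b)$ has the right architecture (random $r$-graph, auxiliary $e_1$-uniform ``conflict'' hypergraph $H^\ast$, uncrowdedness via the gcd condition, then an AKPSS/Duke--Lefmann--R\"odl independent-set theorem), but your parameter choice does not deliver the claimed bound. With $p=c\,n^{h_1-r}(\log n)^{1/(e_1-1)}$ you get $N:=\mathbb{E}[X_0]=\Theta(n^{h_1}(\log n)^{1/(e_1-1)})$ and $\mathbb{E}[Y_1]=\Theta(N\log n)$, so the average degree of $H^\ast$ is $t^{e_1-1}=\Theta(\log n)$, i.e.\ $t=\Theta((\log n)^{1/(e_1-1)})$. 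The uncrowded-hypergraph theorem then yields an independent set of size $\Omega\big((N/t)(\log t)^{1/(e_1-1)}\big)=\Omega\big(n^{h_1}(\log\log n)^{1/(e_1-1)}\big)$ --- only a $\log\log$ gain, not the asserted $(\log n)^{1/(e_1-1)}$. The standard fix (and what the cited proofs do) is to take $p$ \emph{polynomially} above the threshold, $p=n^{h_1-r+\delta}$ for a small constant $\delta>0$, so that $t=\Theta(n^{\delta})$ and $(\log t)^{1/(e_1-1)}=\Theta((\log n)^{1/(e_1-1)})$; the strict inequalities $h_1<h_i$ and the non-degeneracy of sub-configurations (which is where $\gcd(e_1-1,e_1r-v_1)=1$ enters, essentially as you describe) survive for $\delta$ small enough. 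Your divisibility sketch for ruling out equality cases is on the right track but is stated loosely: the relevant range of overlap sizes and the reduction to $(e_1-1)\mid(e'-1)$ need to be carried out carefully, since $e'=2e_1-1$ is not excluded by divisibility alone.
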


In the literature, the following celebrated conjecture is well-known (see, e.g. \cite{Brown-Erdos-Sos-1971} and \cite{AlonShapira}).

\begin{conjecture}\label{hard}
For all fixed integers $r>k\ge 2,e\ge 3$ and $n\to\infty$, it holds that $f_r(n,er-(e-1)k+1,e)=o(n^k)$.
\end{conjecture}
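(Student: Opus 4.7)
This final statement is Conjecture \ref{hard}, the Brown--Erd\H{o}s--S\'os conjecture, which is a well-known open problem; honesty requires that I describe the line of attack I would try and identify the point at which the argument breaks down, rather than claim a complete proof.

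The plan is to reduce the problem to avoidance of a finite family of configurations and then invoke a hypergraph removal lemma. First I would enumerate, up to isomorphism, all $r$-graphs with exactly $e$ edges on at most $er-(e-1)k+1$ vertices; call this finite family $\ma{F}$. An $r$-graph is $\mr{G}_r(er-(e-1)k+1,e)$-free if and only if it contains no (not necessarily induced) copy of any $F\in\ma{F}$. By the hypergraph removal lemma of Nagle--R\"odl--Schacht--Skokan and Gowers, if such an $\ma{H}$ has $\Omega(n^r)$ edges then for each $F\in\ma{F}$ one can delete $o(n^r)$ edges to destroy all copies of $F$. This easily yields $f_r(n,er-(e-1)k+1,e)=o(n^r)$, but the target bound is the much stronger $o(n^k)$ with $k<r$.

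To bridge that gap, the second step would be a supersaturation / density-increment argument: pick the member of $\ma{F}$ on the fewest vertices, count its copies using the overlap structure forced by an assumed $\Omega(n^k)$ edge count, and iteratively identify dense clusters so as to reduce to an auxiliary graph-theoretic statement in the spirit of Ruzsa--Szemer\'edi. For the base case $r=3,\,e=3,\,k=2$ this is precisely the $(6,3)$-theorem, where triangle removal on an auxiliary tripartite graph suffices; for general $r$ but still $e=3$ the same reduction goes through after replacing triangle removal by the $r$-uniform removal lemma, so the conjecture is known in these cases.

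The main obstacle, and the reason the full conjecture has resisted attack for decades, is the jump from $e=3$ to $e\ge 4$: $e$ edges with small union need not share any common sub-edge, so the family $\ma{F}$ cannot be collapsed to a single forbidden pattern and the color-class / tripartite-auxiliary trick has no clear analog. A genuinely new tool would be needed---plausibly a multi-pattern removal lemma tailored to configurations with prescribed overlap profiles, or a combinatorial analog of an algebraic removal principle---and I do not see how to produce one within the scope of this paper.
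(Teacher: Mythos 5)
You were right not to manufacture a proof here: the statement is Conjecture \ref{hard}, the Brown--Erd\H{o}s--S\'os conjecture (in the Alon--Shapira parametrization), and the paper itself offers no proof of it. It is quoted only as motivation for the conjectured upper bounds on $C_2(n,2k+1)$ and $U_r(n,r)$, and nothing in the paper depends on its truth. So there is no argument of the authors to compare yours against, and declining to claim a proof is the only correct response.

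Your sketch of the state of the art is essentially accurate. Setting $r=3$, $k=2$ recovers the classical form $f_3(n,e+3,e)=o(n^2)$; the case $e=3$ is the Ruzsa--Szemer\'edi $(6,3)$-theorem, the extension to general $r$ with $e=3$ follows from the hypergraph removal lemma, and the case $e\ge 4$ remains open. Your diagnosis of the obstruction is also the standard one: the removal lemma only yields $o(n^r)$ rather than $o(n^k)$, and for $e\ge 4$ the forbidden configurations have no single overlap pattern that permits the tripartite auxiliary-graph reduction. One small caution: your first step, deleting $o(n^r)$ edges to destroy all copies of each $F\in\ma{F}$, presupposes that $\ma{H}$ contains $o(n^r)$ copies of $F$ in the first place (the removal lemma's hypothesis), which is not automatic and is part of what must be argued in the known cases; as written, that step is not a valid deduction even for the weak $o(n^r)$ conclusion. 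But since you present the whole passage as a failed line of attack rather than a proof, this does not change the verdict: the statement is open, you said so, and that is the right answer.
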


In \cite{Furedi-canc} and \cite{Furedi-grids} it was shown that $C_2(n,3)=\Theta\big(f_3(n,7,4)\big)$ and $U_r(n,r)=O\big(f_r(n,r^2-r+1,r+1)\big)$, respectively. Indeed, those two bounds and Conjecture \ref{hard} motivated the two conjectured upper bounds in \eqref{conj-canc} and \eqref{conj-un}. It is to our surprise that sparse hypergraphs are even more closely related to $t$-cancellative and $t$-union-free $r$-graphs, than what was known before. In particular, we prove that $t$-cancellative and $t$-union-free $r$-graphs can both be  constructed by sparse hypergraphs, as stated in the following three lemmas.

\begin{lemma}\label{lemma-0}
  Let $\ma{H}$ be an $r$-graph that is simultaneously $\mr{G}_r(tr+\lc\fr{2r-t-1}{t+2}\rc,t+2)$-free and $\mr{G}_r(2r-\lc\fr{2r-t-1}{t+2}\rc-1,2)$-free, then it must be $t$-cancellative.
\end{lemma}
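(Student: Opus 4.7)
The plan is to prove the contrapositive: assume $\mathcal{H}$ is not $t$-cancellative and produce $t+2$ distinct edges whose union is too small, or a pair of edges whose union is too small, violating one of the two sparsity hypotheses. So suppose there exist distinct edges $A_1,\ldots,A_t,B,C\in\mathcal{H}$ with $(\cup_{i=1}^t A_i)\cup B=(\cup_{i=1}^t A_i)\cup C$. Writing $U=\cup_{i=1}^t A_i$, this equality is precisely the statement that $B$ and $C$ agree outside $U$, so the common set $X:=B\setminus U=C\setminus U$ is well defined. The rest of the argument is a two-sided squeeze on $x:=|X|$, obtaining contradictory bounds from the two sparsity conditions.

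For the upper bound on $x$, I would use that $X\subseteq B\cap C$, hence $|B\cup C|=2r-|B\cap C|\le 2r-x$. Since $B\neq C$ (they are distinct edges), the $\mathscr{G}_r(2r-\lceil(2r-t-1)/(t+2)\rceil-1,2)$-freeness forces $|B\cup C|\ge 2r-\lceil(2r-t-1)/(t+2)\rceil$, so
\[
x \le \left\lceil\frac{2r-t-1}{t+2}\right\rceil.
\]
For the matching lower bound, I would compute $|A_1\cup\cdots\cup A_t\cup B\cup C|=|U|+x\le tr+x$, and apply the $\mathscr{G}_r(tr+\lceil(2r-t-1)/(t+2)\rceil,t+2)$-freeness to the $t+2$ distinct edges $A_1,\ldots,A_t,B,C$ to get $|U|+x\ge tr+\lceil(2r-t-1)/(t+2)\rceil+1$, whence
\[
x \ge \left\lceil\frac{2r-t-1}{t+2}\right\rceil+1,
\]
contradicting the previous inequality and finishing the proof.

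There is no real obstacle: the whole content sits in the identification of the set $X=B\setminus U=C\setminus U$ and the observation that one single parameter $x=|X|$ simultaneously controls both $|B\cup C|$ (from above) and $|U\cup B\cup C|$ (from below). The two thresholds $tr+\lceil(2r-t-1)/(t+2)\rceil$ and $2r-\lceil(2r-t-1)/(t+2)\rceil-1$ have been chosen exactly so that the upper bound $\lceil(2r-t-1)/(t+2)\rceil$ from one sparsity condition is strictly less than the lower bound $\lceil(2r-t-1)/(t+2)\rceil+1$ from the other, which is why the ceiling value appears in both places. The one small point to check carefully is the distinctness required by the $\mathscr{G}_r$-freeness: $B\neq C$ follows from the definition of $t$-cancellativity, and all $t+2$ edges $A_1,\ldots,A_t,B,C$ are distinct by hypothesis, so both sparsity conditions are legitimately applicable.
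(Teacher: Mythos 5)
Your proof is correct and takes essentially the same route as the paper's (the paper proves a slightly more general claim for arbitrary $0\le x\le r-1$ and then specializes $x=\lceil\frac{2r-t-1}{t+2}\rceil$): both arguments hinge on the observation that $B$ and $C$ agree outside $U=\cup_{i=1}^t A_i$, use the $\mathscr{G}_r(2r-\lceil\cdot\rceil-1,2)$-freeness to bound the overlap of $B$ and $C$, and use the $\mathscr{G}_r(tr+\lceil\cdot\rceil,t+2)$-freeness to lower-bound $|U\cup B\cup C|$. Your repackaging via the single parameter $|B\setminus U|$ is only a cosmetic variation on the paper's inclusion $(B\cup C)\setminus(B\cap C)\subseteq U$.
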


\begin{lemma}\label{lemma-10}
  If a $(2k+1)$-graph is $\mr{G}_r(4k+2,3)$-free, then it is also 2-cancellative.
\end{lemma}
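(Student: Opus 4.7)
The plan is to argue by contradiction. Assume $\ma{H}$ is a $(2k+1)$-graph that is $\mr{G}_r(4k+2,3)$-free but fails to be $2$-cancellative, so there exist four distinct edges $A_1,A_2,B,C\in\ma{H}$ with $A_1\cup A_2\cup B=A_1\cup A_2\cup C$. The goal is to exhibit a triple of edges of $\ma{H}$ whose union has at most $4k+2$ vertices, contradicting the sparsity hypothesis.

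Write $U=A_1\cup A_2$ and $x=|A_1\cap A_2|$, so $|U|=4k+2-x$. The assumed equality of unions forces $B\setminus U=C\setminus U$; call this common set $D$ and put $d=|D|$, so that $|B\cap U|=|C\cap U|=2k+1-d$. Applying the sparsity hypothesis to the triple $\{A_1,A_2,B\}$ yields $|U|+d\ge 4k+3$, and hence $d\ge x+1$.

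I next probe the ``crossing'' triple $\{A_1,B,C\}$. Because $D$ is disjoint from $U$, one may write $A_1\cup B\cup C=\bigl(A_1\cup(B\cap U)\cup(C\cap U)\bigr)\cup D$ as a disjoint union, and the complement of $A_1\cup(B\cap U)\cup(C\cap U)$ inside $U$ is exactly $(A_2\setminus A_1)\setminus(B\cup C)$. Hence
\[
|A_1\cup B\cup C|=d+|U|-|(A_2\setminus A_1)\setminus(B\cup C)|.
\]
Imposing $|A_1\cup B\cup C|\ge 4k+3$ forces $|(A_2\setminus A_1)\cap(B\cup C)|\ge 2k+2-d$, and the symmetric analysis of $\{A_2,B,C\}$ gives $|(A_1\setminus A_2)\cap(B\cup C)|\ge 2k+2-d$. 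Since $A_1\setminus A_2$ and $A_2\setminus A_1$ are disjoint subsets of $U$, summing yields $|(B\cup C)\cap U|\ge 4k+4-2d$, which clashes with the trivial bound $|(B\cup C)\cap U|\le|B\cap U|+|C\cap U|=4k+2-2d$. This contradiction completes the proof.

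The main obstacle is deciding which of the four natural triples of edges from $\{A_1,A_2,B,C\}$ to probe and what single quantity to track: the ``same-side'' triples $\{A_1,A_2,B\}$ and $\{A_1,A_2,C\}$ only deliver the mild relation $d\ge x+1$, whereas the two ``crossing'' triples $\{A_1,B,C\}$ and $\{A_2,B,C\}$, combined with the elementary inclusion $(B\cup C)\cap U\subseteq(B\cap U)\cup(C\cap U)$, are precisely what squeeze $|(B\cup C)\cap U|$ between incompatible lower and upper bounds.
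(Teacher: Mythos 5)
Your proof is correct: every step checks out, and the final clash between $|(B\cup C)\cap U|\ge 4k+4-2d$ and $|(B\cup C)\cap U|\le 4k+2-2d$ is a genuine contradiction (the preliminary step $d\ge x+1$ is not even needed). This is essentially the paper's approach in contrapositive form: the paper observes $B\triangle C\subseteq A_1\cup A_2$ and applies the pigeonhole principle to find one index $i$ with $|A_i\cup B\cup C|\le 4k+2$, whereas you assume both crossing triples $\{A_i,B,C\}$ have large unions and sum the two resulting inequalities, which is the averaging form of the same pigeonhole.
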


Note that an $r$-graph $\ma{H}$ is said to be {\it $r$-partite} if there exists a partition of its vertices into pairwise disjoint subsets $V_1,\ldots,V_r$ such that for any $A\in\ma{H}$ and $1\le i\le r$, it holds that $|A\cap V_i|=1$.

\begin{lemma}\label{lemma-111}
  If an $r$-partite $r$-graph $\ma{H}$ is simultaneously $\mr{G}_r(tr-r,t)$-free and $\mr{G}_r(tr,2t)$-free, then it is also $t$-union-free.
\end{lemma}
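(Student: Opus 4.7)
The plan is to argue by contrapositive: assume that $\ma{H}$ is not $t$-union-free and exhibit either a $\mr{G}_r(tr-r,t)$- or a $\mr{G}_r(tr,2t)$-configuration in $\ma{H}$. Fix distinct $\ma{A},\ma{B}\s\ma{H}$ with $1\le|\ma{A}|,|\ma{B}|\le t$ and $\bigcup_{A\in\ma{A}}A=\bigcup_{B\in\ma{B}}B=:U$, and set $a=|\ma{A}|\le b=|\ma{B}|$, $c=|\ma{A}\cap\ma{B}|$. Throughout I would exploit two elementary facts: first, $r$-partiteness yields $|U|\le ra$, because for each part $V_j$ the $a$ edges of $\ma{A}$ project to at most $a$ distinct vertices of $V_j$, so $|U|=\sum_j|U\cap V_j|\le ra$; second, distinctness of $\ma{A},\ma{B}$ forces $|\ma{A}\cup\ma{B}|\ge a+1$ (from $|\ma{A}\cap\ma{B}|\le a-1$ when $a=b$, and from $|\ma{A}\cup\ma{B}|\ge b\ge a+1$ when $a<b$).

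I would then split into cases. If $a\le t-1$, then $|U|\le ra\le tr-r$; when $|\ma{A}\cup\ma{B}|\ge t$ any $t$ edges of $\ma{A}\cup\ma{B}$ form a $\mr{G}_r(tr-r,t)$-configuration directly, and when $|\ma{A}\cup\ma{B}|<t$ one enlarges $\ma{A}\cup\ma{B}$ by arbitrary edges of $\ma{H}\setminus(\ma{A}\cup\ma{B})$ up to $t$ edges, whose union is at most $|U|+r(t-|\ma{A}\cup\ma{B}|)\le ra+r(t-a-1)=tr-r$, again contradicting $\mr{G}_r(tr-r,t)$-freeness. If $a=b=t$ and $\ma{A}\cap\ma{B}=\emptyset$, then $\ma{A}\cup\ma{B}$ is a collection of $2t$ distinct edges with union of size $\le rt$, which directly violates $\mr{G}_r(tr,2t)$-freeness.

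The main obstacle is the remaining case $a=b=t$ with $c\in[1,t-1]$, in which $|\ma{A}\cup\ma{B}|=2t-c<2t$ so the $\mr{G}_r(tr,2t)$-free hypothesis does not apply as is. If $|U|\le tr-r$ then $\ma{A}$ itself witnesses a $\mr{G}_r(tr-r,t)$-configuration, so one may assume $|U|\ge tr-r+1$. Writing $u_j=|U\cap V_j|$ and $k=|\{j:u_j=t\}|$, the bounds $u_j\le t$ together with $\sum_j u_j\ge tr-r+1$ force $k\ge 1$ and moreover $|U|\le kt+(r-k)(t-1)=tr-r+k$. The crucial observation is that on any part $V_j$ with $u_j=t$ both $\ma{A}$ and $\ma{B}$ project \emph{bijectively} onto $U\cap V_j$ (as each has exactly $t$ edges and exactly $t$ distinct projections); in particular, for any common edge $C\in\ma{A}\cap\ma{B}$ the vertex of $C$ in such a part lies in no other edge of $\ma{A}\cup\ma{B}$. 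Picking any $C\in\ma{A}\cap\ma{B}$ then delivers at least $k$ vertices of $C$ that become uncovered once $C$ is deleted, so $|\bigcup((\ma{A}\cup\ma{B})\setminus\{C\})|\le|U|-k\le tr-r$. Since $|(\ma{A}\cup\ma{B})\setminus\{C\}|=2t-c-1\ge t$ because $c\le t-1$, any $t$ edges of this reduced family yield the required $\mr{G}_r(tr-r,t)$-configuration.

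The difficulty is in isolating the correct structural feature for the overlapping subcase, namely the bijectivity of both $\ma{A}$ and $\ma{B}$ on every part $V_j$ of maximum multiplicity $u_j=t$; combined with the counting inequality $|U|\le tr-r+k$ this bijectivity provides the ``savings'' of at least $k$ vertices needed to shrink the union below $tr-r$ by removing a single common edge, thereby completing the reduction to a sparse configuration.
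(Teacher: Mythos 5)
Your proof is correct, and in the decisive case it takes a genuinely different route from the paper's. The preliminary reductions essentially coincide: both arguments force $|\ma{A}|=|\ma{B}|=t$ by padding a too-small witness to a $t$-edge family whose union has at most $tr-r$ vertices, and both dispose of $\ma{A}\cap\ma{B}=\emptyset$ via the $\mr{G}_r(tr,2t)$-free hypothesis. The divergence is in the overlapping case $1\le|\ma{A}\cap\ma{B}|\le t-1$. The paper applies its auxiliary Lemma \ref{lemma-2} to the mixed family $\ma{A}\cup\{B_{i+1}\}\setminus\{C_1\}$ to find a single part $V_j$ on which these $t$ edges project injectively, and then reads off a contradiction directly from the equality of the two projected vertex sets (one side has $t$ distinct vertices, the other at most $t-1$). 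You instead count globally: from $|U|\ge tr-r+1$ you extract $k\ge1$ saturated parts with $|U\cap V_j|=t$ together with the bound $|U|\le tr-r+k$, note that \emph{both} $\ma{A}$ and $\ma{B}$ project bijectively onto each saturated part, and conclude that deleting one common edge uncovers at least $k$ vertices, yielding $t$ distinct edges with union of size at most $tr-r$ and hence a second violation of $\mr{G}_r(tr-r,t)$-freeness. Your version is self-contained (no analogue of Lemma \ref{lemma-2} is needed) and has the merit that every branch terminates in an explicit sparse configuration, making the role of the two hypotheses transparent; the paper's version avoids the saturation counting and closes with a shorter set-theoretic contradiction. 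One shared caveat, not a defect relative to the paper: both proofs implicitly assume $\ma{H}$ has at least $t$ edges when padding small families (the paper hides this in Fact \ref{lemma-1}), which is harmless in the intended application where $\ma{H}$ is large.
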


Using the above lemmas in concert with Lemma \ref{probability}, one can easily deduce the lower bounds in Theorems \ref{theorem-canc}, \ref{2-cancellative} and \ref{theorem-uf}, as detailed below.

\subsection{Proofs of Theorems \ref{theorem-canc}, \ref{2-cancellative} and \ref{theorem-uf}}\label{section-3}

\begin{proof}[\textbf{Proof of Theorem \ref{theorem-canc}}]
The proof of the upper bound is  postponed to Section \ref{upper}. For the first lower bound we invoke Lemma \ref{probability} $(a)$. Indeed, since for $r\ge 3,t\ge 3$, $\frac{2r-\lc\fr{2r-t-1}{t+2}\rc}{t+1}\le \lc\fr{2r-t-1}{t+2}\rc+1$ and
$\frac{2r-\lc\fr{2r-t-1}{t+2}\rc}{t+1}=\lf\frac{2r}{t+2}\rf+\frac{2r \mod (t+2)}{t+1}:=h$, then there exists an $r$-graph with $\Omega(n^h)$ edges that is simultaneously $\mr{G}_r(tr+\lc\fr{2r-t-1}{t+2}\rc,t+2)$-free and $\mr{G}_r(2r-\lc\fr{2r-t-1}{t+2}\rc-1,2)$-free. Then, by  Lemma \ref{lemma-0} this $r$-graph is $t$-cancellative.

For the second lower bound we invoke  Lemma \ref{probability} (b). Indeed,  if $\gcd(2r-\lc\fr{2r-t-1}{t+2}\rc,t+1)=1$, then it is not hard to see that $t+2\nmid 2r-t-1$ and hence $\frac{2r-\lc\fr{2r-t-1}{t+2}\rc}{t+1}<\lc\fr{2r-t-1}{t+2}\rc+1.$  Lastly, we invoke again Lemma \ref{lemma-0}. 
\end{proof}

\begin{proof}[\textbf{Proof of Theorem \ref{2-cancellative}}]
  It was shown by Alon and Shapira (see Theorem 1 of \cite{AlonShapira}) that for any fixed $k\ge 1$, $$f_{2k+1}(n,4k+2,3)>n^{k+1-o(1)},$$ where $o(1)\rightarrow 0$ as $n\rightarrow\infty$.
  The result follows by invoking Lemma \ref{lemma-10}.
\end{proof}

To prove Theorem \ref{theorem-uf} we need a well-known result of Erd\H{o}s and Kleitman \cite{Erdos-Kleitman}.

\begin{lemma}[see Theorem 1 of \cite{Erdos-Kleitman}]\label{Erdos-Kleitman}
  Any $r$-graph $\ma{H}$ contains   an $r$-partite subhypergraph $\ma{H}^*\subseteq\ma{H}$ with at least  $|\ma{H}^*|\ge\frac{r!}{r^r}|\ma{H}|$ edges.
\end{lemma}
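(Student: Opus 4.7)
The plan is to prove the Erd\H{o}s--Kleitman lemma by a short probabilistic argument, namely random vertex coloring. First, I would assign to each vertex $v \in [n]$ an independent, uniformly random color $c(v) \in [r]$, and set $V_i := \{v \in [n] : c(v) = i\}$ for $1 \le i \le r$. This produces a random ordered partition of the vertex set into $r$ (possibly empty) classes $V_1, \ldots, V_r$.

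Next, I would call an edge $A \in \ma{H}$ \emph{rainbow} if $|A \cap V_i| = 1$ for every $1 \le i \le r$, and denote by $\ma{H}^*$ the (random) subfamily of all rainbow edges. By construction $\ma{H}^*$ is $r$-partite with parts $V_1, \ldots, V_r$, so it suffices to show that $\mathbb{E}[|\ma{H}^*|] \ge \frac{r!}{r^r}|\ma{H}|$, since then some realization of the colorings achieves this bound and yields the desired $\ma{H}^* \subseteq \ma{H}$.

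To bound the expectation, fix any edge $A = \{a_1, \ldots, a_r\} \in \ma{H}$. The colors $c(a_1), \ldots, c(a_r)$ are i.i.d.\ uniform on $[r]$, and $A$ is rainbow if and only if the tuple $(c(a_1), \ldots, c(a_r))$ is a permutation of $[r]$. There are $r^r$ equally likely color tuples and $r!$ of them are permutations, so $\Pr[A \text{ rainbow}] = r!/r^r$. Summing over $A \in \ma{H}$ and applying linearity of expectation gives
\[
\mathbb{E}[|\ma{H}^*|] \;=\; \sum_{A \in \ma{H}} \Pr[A \text{ rainbow}] \;=\; \frac{r!}{r^r}\,|\ma{H}|,
\]
which completes the argument. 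There is no real obstacle here; the only thing to be careful about is that the event ``$A$ is rainbow'' depends only on the $r$ independent colors assigned to vertices of $A$, so the probability calculation is clean and the linearity of expectation step is immediate.
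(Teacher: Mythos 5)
Your proof is correct: the random $r$-coloring argument, computing that each edge is rainbow with probability $r!/r^r$ and applying linearity of expectation, is the standard (averaging) proof of this lemma. The paper does not prove the statement itself but cites Theorem 1 of Erd\H{o}s--Kleitman, whose argument is essentially this same averaging over colorings, so your approach matches.
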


\begin{proof}[\textbf{Proof of Theorem \ref{theorem-uf}}]
 For the first lower bound, observe that for $r,t\ge 3$, $\frac{r}{t-1}<\frac{tr}{2t-1}$, so by Lemma \ref{probability} $(a)$ and Lemma \ref{Erdos-Kleitman} there exists an $r$-partite $r$-graph with $\Omega(n^{\frac{r}{t-1}})$ edges that is simultaneously $\mr{G}_r(tr-r,t)$-free and $\mr{G}_r(tr,2t)$-free. Then by Lemma \ref{lemma-111} this $r$-graph is $t$-union free. The second lower bound is proved similarly by invoking Lemma \ref{probability} $(b)$, Lemma \ref{Erdos-Kleitman} and Lemma \ref{lemma-111}.
\end{proof}

The proofs of Lemmas \ref{lemma-0}, \ref{lemma-10} and \ref{lemma-111} are presented in the next section.

\section{Proofs of Lemmas \ref{lemma-0}, \ref{lemma-10} and \ref{lemma-111}}\label{connections}

\subsection{Proof of Lemma \ref{lemma-0}}

We prove the following claim which is slightly more general than Lemma \ref{lemma-0}. The proof of the latter  follows by setting $x=\lceil \frac{2r-t-1}{t+2}\rceil$.

\begin{claim}
 Let  $\ma{H}$ be an $r$-graph that is simultaneously $\mr{G}_r(tr+x,t+2)$-free and $\mr{G}_r(2r-x-1,2)$-free for some integer  $0\le x\le r-1$, then $\ma{H}$ is  $t$-cancellative.
\end{claim}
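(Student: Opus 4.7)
The plan is to argue by contradiction, deducing a violation of the first sparsity hypothesis from the failure of $t$-cancellativity. Suppose $\ma{H}$ is not $t$-cancellative, and fix $t+2$ distinct edges $A_1,\ld,A_t,B,C\in\ma{H}$ with $U\cup B=U\cup C$, where $U:=\cup_{i=1}^t A_i$. The goal is to show that these $t+2$ edges span at most $tr+x$ vertices, which would contradict the $\mr{G}_r(tr+x,t+2)$-freeness of $\ma{H}$.

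The key observation is that the equation $U\cup B=U\cup C$ forces $B\setminus U=C\setminus U$; denote this common ``overhang'' by $S$. Then $S\s B\cap C$ and $C=(C\cap U)\cup S\s U\cup B$, so
\[
|A_1\cup\cdots\cup A_t\cup B\cup C|=|U\cup B|=|U|+|S|\le tr+|S|.
\]
Thus the whole argument reduces to the single-inequality task of bounding $|S|$, and this is exactly where the second hypothesis is designed to kick in: $B$ and $C$ are two distinct edges of $\ma{H}$, so $\mr{G}_r(2r-x-1,2)$-freeness yields $|B\cup C|\ge 2r-x$, i.e.\ $|B\cap C|\le x$. Because $S\s B\cap C$, we conclude $|S|\le x$, hence $|A_1\cup\cdots\cup A_t\cup B\cup C|\le tr+x$, the desired contradiction.

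The only genuinely substantive point is the structural observation $B\setminus U=C\setminus U$, which is what turns the cancellation equation into a nontrivial geometric constraint on $B$ and $C$; everything else is routine set arithmetic. I do not anticipate any real obstacle beyond parsing the definitions correctly. The parameter $x$ does not enter the combinatorial step at all: its specific value $x=\lc(2r-t-1)/(t+2)\rc$ in Lemma \ref{lemma-0} is chosen only to equalise the two exponents $(2r-x)/(t+1)$ and $x+1$ delivered by Lemma \ref{probability}, thereby optimising the lower bound on $C_t(n,r)$ that one extracts via this argument.
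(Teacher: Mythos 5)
Your proof is correct and follows essentially the same route as the paper: both arguments use the $\mr{G}_r(2r-x-1,2)$-freeness to get $|B\cap C|\le x$, and then show that the cancellation equation forces the $t+2$ edges to span at most $tr+x$ vertices (the paper via inclusion--exclusion with $(B\cup C)\setminus(B\cap C)\s\cup_i A_i$, you via the equivalent observation $B\setminus U=C\setminus U\s B\cap C$), contradicting the $\mr{G}_r(tr+x,t+2)$-freeness. Your closing remark about the choice of $x$ equalising the two exponents also matches the paper's optimisation in the proof of Theorem \ref{theorem-canc}.
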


\begin{proof}
  Assume towards   contradiction that there exist $t+2$ distinct edges $A_1,\ldots,A_t,B,C$ of $\ma{H}$ such that $(\cup_{i=1}^t A_i)\cup B=(\cup_{i=1}^t A_i)\cup C$, hence
  \begin{equation}\label{eq-2}
   (B\cup C)\backslash (B\cap C) \subseteq \cup_{i=1}^t A_i.
  \end{equation}
  Furthermore, since $\ma{H}$ is $\mr{G}_r(2r-x-1,2)$-free then
  \begin{equation}\label{eq-21}
     |B\cap C|\leq x.
  \end{equation}
 Therefore \begin{align}
    |(\cup_{i=1}^t A_i)\cup B\cup C|&=|\cup_{i=1}^t A_i| +| B\cup C|-|(\cup_{i=1}^t A_i)\cap (B\cup C)|\nonumber\\
    &\leq tr +| B\cup C|-|(\cup_{i=1}^t A_i)\cap (B\cup C)|\nonumber\\
    &\leq tr +| B\cup C|-(| B\cup C|- |(B\cap C)|)\label{stam}\\
    &\leq tr +x,\label{stam2}
  \end{align}
  where \eqref{stam} and \eqref{stam2} follow from \eqref{eq-2} and \eqref{eq-21} respectively. The contradiction follows since  $\ma{H}$ is  $\mr{G}_r(tr+x,t+2)$-free.
\end{proof}


\subsection{Proof of Lemma \ref{lemma-10}}

Let $\ma{H}$ be a $(2k+1)$-graph that is $\mr{G}_{2k+1}(4k+2,3)$-free,  we show next that $\ma{H}$ is also 2-cancellative. Suppose for contradiction that there exist four distinct edges $A_1,A_2,B,C\in\ma{H}$ such that $A_1\cup A_2\cup B=A_1\cup A_2\cup C$. Let  $|B\cap C|=x$  for some  integer $0\le x\le 2k$. It is easy to see that $|(B\cup C)\setminus(B\cap C)|=2\times(2k+1-x)$ and $(B\cup C)\setminus(B\cap C)\subseteq A_1\cup A_2$. Therefore, by the pigeonhole principle, there exists $i\in\{1,2\}$ such that $|A_i\cap \big((B\cup C)\setminus(B\cap C)\big)|\ge 2k+1-x$, which implies that $|A_i\setminus (B\cup C)|\le x$. Then
  $$|A_i\cup B\cup C|=|B\cup C|+|A_i\setminus (B\cup C)|=2(2k+1)-x+|A_i\setminus (B\cup C)|\le 4k+2,$$ which violates the assumption that $\ma{H}$ is $\mr{G}_{2k+1}(4k+2,3)$-free.

\subsection{Proof of Lemma \ref{lemma-111}}

The following fact is easy to verify.
\begin{fact}\label{lemma-1}
 If an $r$-graph $\ma{H}$ with at least $t$ edges is $\mr{G}_r(tr-r,t)$-free, then it is also $\mr{G}_r(sr-r,s)$-free for any $1\le s\le t$.
\end{fact}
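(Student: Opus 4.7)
The plan is to reduce a size-$s$ configuration to a size-$t$ one by padding, then invoke the hypothesis and absorb the added edges into a trivial vertex count. First, I would fix any $s$ distinct edges $A_1,\ldots,A_s\in\ma{H}$ and, since $|\ma{H}|\ge t$, append $t-s$ further distinct edges $A_{s+1},\ldots,A_t$ chosen from $\ma{H}\setminus\{A_1,\ldots,A_s\}$. The role of the assumption $|\ma{H}|\ge t$ is precisely to guarantee that this padding is possible.

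Next, I would apply the $\mr{G}_r(tr-r,t)$-free hypothesis to $A_1,\ldots,A_t$ to obtain $|\cup_{i=1}^t A_i|\ge tr-r+1$, while the obvious inequality $|\cup_{i=1}^t A_i|\le|\cup_{i=1}^s A_i|+(t-s)r$, coming from the fact that each new $r$-edge contributes at most $r$ new vertices, bounds the union from above. Combining the two yields
$$\Big|\bigcup_{i=1}^s A_i\Big|\ge tr-r+1-(t-s)r=sr-r+1,$$
which is exactly the $\mr{G}_r(sr-r,s)$-free condition for the selected $s$-tuple. Since the edges $A_1,\ldots,A_s$ were arbitrary, this finishes the argument.

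I do not expect any real obstacle: the only thing to watch is the distinctness of the padding edges, which is handed to us by $|\ma{H}|\ge t$. Morally, the fact simply says that sparsity for a larger subfamily is inherited by smaller subfamilies, once one has enough edges to perform the padding in the first place.
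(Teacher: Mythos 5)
Your proof is correct, and it is precisely the routine padding argument the paper has in mind when it declares this fact "easy to verify" without writing a proof: the computation $tr-r+1-(t-s)r=sr-r+1$ checks out, and the hypothesis $|\ma{H}|\ge t$ is used exactly where you say it is. Nothing to add.
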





We need one more result to prove Lemma \ref{lemma-111}.

\begin{lemma}\label{lemma-2}
  Let  $\ma{H}$ be an $r$-partite $r$-graph  with vertex parts  $V_1,\ldots,V_r$. 
  If $\ma{H}$ is $\mr{G}_r(tr-r,t)$-free, then for any $t$ distinct edges $A_1,\ldots,A_t$ of $\ma{H}$, there exists $1\le i\le r$, such that the $t$ vertices $A_1\cap V_i,\ldots,A_t\cap V_i$ are all distinct.
\end{lemma}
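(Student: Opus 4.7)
The plan is to argue by contradiction, using only the $r$-partite structure and the size of the union.

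Suppose there exist $t$ distinct edges $A_1,\ldots,A_t$ of $\ma{H}$ such that for every index $1\le i\le r$, the multiset $\{A_1\cap V_i,\ldots,A_t\cap V_i\}$ contains a repetition; i.e.\ the number of \emph{distinct} elements among $A_1\cap V_i,\ldots,A_t\cap V_i$ is at most $t-1$. Because $\ma{H}$ is $r$-partite, each edge $A_j$ intersects each part $V_i$ in exactly one vertex, so the union of the $t$ edges decomposes along the parts as
\begin{equation*}
\Bigl|\bigcup_{j=1}^{t} A_j\Bigr| \;=\; \sum_{i=1}^{r}\bigl|\{A_1\cap V_i,\ldots,A_t\cap V_i\}\bigr|.
\end{equation*}

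Under the contradiction hypothesis each summand is at most $t-1$, so $|\cup_{j=1}^{t}A_j|\le r(t-1)=tr-r$. This contradicts the assumption that $\ma{H}$ is $\mr{G}_r(tr-r,t)$-free, which requires the union of any $t$ distinct edges to contain at least $tr-r+1$ vertices. Hence some part $V_i$ must see $t$ pairwise distinct intersections, as claimed.

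There is no real obstacle here: the whole argument is a double counting (partition the union by the vertex parts) combined with the pigeonhole observation that a repetition in every part would force $|\cup A_j|\le tr-r$. The only small point worth being explicit about is invoking $r$-partiteness to guarantee $|A_j\cap V_i|=1$, which is what makes the decomposition of $|\cup_{j=1}^{t}A_j|$ into a sum over parts an equality rather than just an upper bound.
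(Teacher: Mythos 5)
Your proposal is correct and follows exactly the paper's own argument: assume every part has a repeated intersection, decompose $|\cup_j A_j|$ as the sum over parts of the number of distinct vertices $A_j\cap V_i$ (using $r$-partiteness), bound each summand by $t-1$, and contradict $\mr{G}_r(tr-r,t)$-freeness. No differences worth noting.
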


\begin{proof}
  Suppose for contradiction that the $t$ distinct edges $A_1,\ldots,A_t\in\ma{H}$ violate the assertion of the lemma. Then, for each $1\le i\le r,  |\cup_j (A_j\cap V_i)|\leq t-1$, hence
  $|\cup_j A_j|=\sum_{i=1}^r|\cup_j (A_j\cap V_i)|\leq tr-r$, and we arrive at a contradiction.
\end{proof}


\begin{proof}[\textbf{Proof of Lemma \ref{lemma-111}}]
  Suppose for contradiction that $\ma{H}$ is not $t$-union-free. Then there exist distinct subhypergraphs $\ma{A},\ma{B}\s\ma{H}, 1\le|\ma{A}|,|\ma{B}|\le t$, such that $\cup_{A\in\ma{A}}A=\cup_{B\in\ma{B}} B$.

  It is easy to see that $|\ma{A}|=|\ma{B}|=t$. Indeed, assume that $|\ma{A}|=s-1\le t-1$ and let $\ma{A}=\{A_1,\ldots,A_{s-1}\}$. Since $B\s \cup_{i=1}^{s-1} A_i$  for  $B\in\ma{B}$, then
  $$|\cup_{i=1}^{s-1} A_i\cup B|= |\cup_{i=1}^{s-1} A_i|\le (s-1)r,$$
  which by Fact \ref{lemma-1}, violates the $\mr{G}_r(sr-r,s)$-freeness of $\ma{H}$.

  It is also not hard to show that $\ma{A}\cap\ma{B}\neq\emptyset$, since otherwise $\ma{A}\cup\ma{B}$ consists of $2t$ distinct edges of $\ma{H}$ that satisfy
  $$|(\cup_{A\in\ma{A}} A)\cup(\cup_{B\in\ma{B}} B)|=|\cup_{A\in\ma{A}} A|\le tr,$$
  violating the $\mr{G}_r(tr,2t)$-freeness of $\ma{H}$. Therefore we may assume that $|\ma{A}\cap\ma{B}|=i\ge 1$.

  Suppose that we have $$\ma{A}:=\{C_1,\ldots,C_i,A_{i+1},\ldots,A_t\}\text{ and }\ma{B}:=\{C_1,\ldots,C_i,B_{i+1},\ldots,B_t\},$$ where $C_1,\ldots,C_i,A_{i+1},\ldots,A_t,B_{i+1},\ldots,B_t$ are
  $2t-i$ distinct edges of $\ma{H}$.
  By applying Lemma \ref{lemma-2} to the $t$ edges in  $\ma{A}':=\ma{A}\cup \{B_{i+1}\}\backslash \{C_1\}$ we conclude that there exists  $1\le j\le r$ such that the $t$  elements $A\cap V_j, A\in \ma{A}'$  are pairwise distinct.

  Let $c_l=C_l\cap V_j, \text{ for } 1\le l\le i \text{ and }
    a_l=A_l\cap V_j,  b_l=B_l\cap V_j, \text{ for } i< l\le t $,  and note that $b_{i+1}, c_2,\ldots ,c_i,a_{i+1},\ldots,a_t$ are pairwise distinct vertices. By assumption 
  \begin{equation}\label{eq-1}
    (\cup_{A\in\ma{A}}A)\cap V_j=\{c_1,\ldots,c_i,a_{i+1},\ldots,a_t\}=\{c_1,\ldots,c_i,b_{i+1},\ldots,b_t\}=(\cup_{B\in\ma{B}} B)\cap V_j.
  \end{equation}

  \noindent 
Hence,  by \eqref{eq-1} and the pairwise distinctness of  $b_{i+1}, c_2,\ldots, c_i,a_{i+1},\ldots,a_t$, it is clear that $b_{i+1}=c_1$ and therefore also
  $c_1,\ldots,c_i,a_{i+1},\ldots,a_t$ are pairwise distinct too.


  We conclude that the leftmost set  of \eqref{eq-1} consists of exactly $t$ distinct elements, whereas the rightmost set  consists of at most $t-1$ distinct elements, and we arrive at a  contradiction.   \end{proof}


\section{Proof of Theorem \ref{theorem-canc}, the upper bound}\label{upper}
We will need several auxiliary lemmas before presenting the proof that $C_t(n,r)=O(n^{\lc\frac{r}{\lf t/2\rf+1}\rc})$. The  following fact is easy to verify.

\begin{fact}\label{fact-1}
 A $t$-cancellative $r$-graph with at least $t+2$ edges  is also $s$-cancellative for any $1\le s\le t$.
\end{fact}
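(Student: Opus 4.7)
The plan is to prove the contrapositive: if $\ma{H}$ fails to be $s$-cancellative for some $1 \le s \le t$, then $\ma{H}$ also fails to be $t$-cancellative, provided it has enough edges. The hypothesis $|\ma{H}| \ge t+2$ is needed precisely to have room to pad out a small bad configuration into a large bad configuration.

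Concretely, I would first suppose towards contradiction that $\ma{H}$ is $t$-cancellative with $|\ma{H}| \ge t+2$, yet there exist $s+2$ distinct edges $A_1,\ldots,A_s,B,C \in \ma{H}$ witnessing the failure of $s$-cancellativity, i.e., $(\cup_{i=1}^s A_i) \cup B = (\cup_{i=1}^s A_i) \cup C$. Since $\ma{H}$ contains at least $t+2$ distinct edges and only $s+2 \le t+2$ of them have been named, the pigeonhole-style count leaves at least $t-s$ further edges of $\ma{H}$ not appearing among $A_1,\ldots,A_s,B,C$. Pick any such $t-s$ edges and label them $A_{s+1},\ldots,A_t$, so that $A_1,\ldots,A_t,B,C$ are now $t+2$ pairwise distinct edges of $\ma{H}$.

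The key observation is then that taking the union of both sides of the equality $(\cup_{i=1}^s A_i) \cup B = (\cup_{i=1}^s A_i) \cup C$ with the fixed set $A_{s+1} \cup \cdots \cup A_t$ preserves the equality, and one obtains $(\cup_{i=1}^t A_i) \cup B = (\cup_{i=1}^t A_i) \cup C$. This directly contradicts the $t$-cancellativity of $\ma{H}$ applied to the $t+2$ distinct edges we have produced, completing the proof.

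The only subtle point, and the one I expect to be the main (minor) obstacle, is the availability of the padding edges $A_{s+1},\ldots,A_t$: this is exactly where the hypothesis $|\ma{H}| \ge t+2$ is used, since we must be able to augment the original $s+2$ edges to $t+2$ edges that remain pairwise distinct. Once that bookkeeping is done, the rest of the argument is purely set-theoretic and immediate.
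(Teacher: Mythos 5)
Your proof is correct; the paper states this fact without proof ("easy to verify"), and your padding argument — augmenting the $s+2$ bad edges to $t+2$ distinct edges using $|\ma{H}|\ge t+2$ and taking unions with the extra edges on both sides — is exactly the intended verification.
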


For an $r$-graph $\ma{H}\s\binom{[n]}{r}$ and a subset $T\s[n]$ with $1\le |T|\le r$, the {\it codegree} of $T$ with respect to $\ma{H}$, denoted as $\deg_{\ma{H}}(T)$, is the number of edges of $\ma{H}$ that contain $T$, i.e.,  $\deg_{\ma{H}}(T)=|\{A\in\ma{H}:T\s A\}|.$

\begin{lemma}\label{naivelemma2}
  For integers $s\ge 1$ and $1\le k\le r$, any $r$-graph $\mathcal{H}$ contains a subhypergraph 
with at least $\max\{|\mathcal{H}|-s\binom{n}{k},0\}$ edges, such that each $k$-subset has codegree either zero or at least $s+1$.
\end{lemma}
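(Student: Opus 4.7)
The plan is to produce the desired subhypergraph by an iterative pruning procedure. Initialise $\ma{H}_0:=\ma{H}$. While there exists a $k$-subset $T\s[n]$ with $1\le\deg_{\ma{H}_i}(T)\le s$, delete from $\ma{H}_i$ all edges containing $T$ (there are at most $s$ such edges) to form $\ma{H}_{i+1}$. Stop once no such $T$ exists; the resulting subhypergraph is our candidate, since by construction every $k$-subset now has codegree $0$ or at least $s+1$ in it.

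The heart of the argument is to bound the total number of deleted edges by $s\binom{n}{k}$. The crucial observation is that codegrees are monotonically non-increasing under edge deletions, so once a $k$-subset $T$ attains codegree $0$ at some step, it remains at codegree $0$ for the rest of the process. In particular, the $k$-subset $T$ chosen at any iteration has codegree $0$ immediately afterwards, and therefore will never again be an eligible choice (a chosen $T$ always has positive codegree at the moment of selection). Consequently each $k$-subset is chosen at most once, so the process terminates after at most $\binom{n}{k}$ iterations, each of which removes at most $s$ edges.

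Combining the two observations, the resulting subhypergraph has at least $|\ma{H}|-s\binom{n}{k}$ edges and satisfies the codegree condition. In the degenerate case $|\ma{H}|-s\binom{n}{k}<0$, the empty subhypergraph trivially satisfies the conclusion, which matches the $\max\{\cdot,0\}$ in the statement.

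I do not expect a genuine obstacle: the only point meriting care is the monotonicity argument that guarantees each $k$-subset is processed at most once, and this is immediate because deleting edges can only weakly decrease any codegree. The proof is therefore essentially a one-line greedy argument together with a bookkeeping bound on the number of iterations.
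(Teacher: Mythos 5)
Your proof is correct and follows essentially the same greedy-pruning approach as the paper; the only difference is bookkeeping (you delete all edges through a chosen $k$-subset in one batch and argue each $k$-subset is chosen at most once, while the paper deletes one edge at a time and charges each deletion to a ``responsible'' $k$-subset that can absorb at most $s$ charges). Both accountings yield the same bound of $s\binom{n}{k}$ removed edges.
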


\begin{proof}
  Let us successively remove from $\mathcal{H}$ the edges that contain at least one $k$-subset of codegree at most $s$. Let $A_i$ be the $i$-th removed edge of $\mathcal{H}$, and $T_i$ be some $k$-subset of codegree at most $s$ contained in $A_i$. We say that $T_i$ is {\it responsible} for $A_i$. During the edge removal process, the codegree (with respect to the $r$-graph after the removal of the edges $A_1,A_2,\ldots$) of any $k$-subset can only decrease, therefore each of the $\binom{n}{k}$  $k$-subsets  can be responsible for the removal of at  most $s$ edges. Hence,  the process will terminate  after at most $s\binom{n}{k}$ edge removals. Clearly, the  resulting $r$-graph is either empty or has at least $|\mathcal{H}|-s\binom{n}{k}$ edges, and it satisfiess the assertion on the codegrees.
\end{proof}

\begin{lemma}\label{last}
For any positive integer $t$,  $C_{2t}(n,r)=O(n^{\lceil\frac{r}{t+1}\rceil})$.
\end{lemma}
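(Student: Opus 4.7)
Let $k = \lceil r/(t+1)\rceil$. The plan is to adapt the Frankl--F\"uredi upper bound argument for $(t+1)$-cover-free hypergraphs to the $2t$-cancellative setting, combining a structural consequence drawn from Fact~\ref{fact-1} with the codegree regularization of Lemma~\ref{naivelemma2}. The first step is to derive a structural property of $\ma{H}$: assuming $|\ma{H}| \ge 2t+2$, Fact~\ref{fact-1} gives that $\ma{H}$ is also $(t+1)$-cancellative, from which it follows that for any $t+1$ distinct edges $A_1, \ldots, A_{t+1}$ of $\ma{H}$, at most one other edge of $\ma{H}$ can be contained in $\bigcup_{i=1}^{t+1} A_i$; indeed, two such extras $B, C$ would satisfy $B \triangle C \subseteq B \cup C \subseteq \bigcup_{i=1}^{t+1} A_i$, directly violating the $(t+1)$-cancellative condition on the $t+3$ distinct edges $A_1, \ldots, A_{t+1}, B, C$.

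Next, one would apply Lemma~\ref{naivelemma2} with the value $k$ and a sufficiently large constant $s = s(r,t)$ to obtain a subhypergraph $\ma{H}' \subseteq \ma{H}$ with $|\ma{H}| - |\ma{H}'| \le s \binom{n}{k} = O(n^k)$ such that every $k$-subset of $[n]$ has codegree either zero or at least $s+1$ with respect to $\ma{H}'$. The goal is then to show $|\ma{H}'| = O(n^k)$. Following the Frankl--F\"uredi strategy, one aims to exhibit for each edge $E \in \ma{H}'$ a $k$-subset $T \subseteq E$ with $\deg_{\ma{H}'}(T) \le C$ for some constant $C = C(r,t)$; summing over $k$-subsets then gives $|\ma{H}'| \le C \binom{n}{k} = O(n^k)$ by double counting. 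If $E$ fails to admit such a $T$, every $k$-subset of $E$ has codegree at least $s+1$; partitioning $E$ into $t+1$ parts of sizes $k$ or $k-1$ (possible since $(t+1)k \ge r$) and greedily selecting distinct $B_1, \ldots, B_{t+1} \in \ma{H}' \setminus \{E\}$ each containing one part, we get $E \subseteq \bigcup_{i=1}^{t+1} B_i$. With $s$ chosen large enough, the plan is to exploit the abundance of choices for the $B_i$'s to force some covering $\bigcup B_i$ to contain a second edge $E' \neq E$ of $\ma{H}'$, producing two extras in a $(t+1)$-tuple's union and contradicting the structural property.

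The main obstacle is precisely this last implication: whereas the classical Frankl--F\"uredi argument for strict $(t+1)$-cover-free hypergraphs derives a contradiction from a single covered edge $E$, here one extra per $(t+1)$-tuple is tolerated, so a more delicate counting argument is needed to catch a second extra $E'$. A natural remedy is to additionally regularize codegrees at level $k+1$, so that every vertex of a shared neighborhood lies in many edges through the common $k$-subset $T_0$, allowing a greedy construction of $B_1, \ldots, B_{t+1}$ containing $T_0$ and covering the outer parts of both $E_1$ and $E_2$ simultaneously, where $E_1, E_2 \in \ma{H}'$ are two edges through $T_0$. The constant $s$ in Lemma~\ref{naivelemma2} must then be tuned to exceed the threshold $C$ emerging from this refined argument, ensuring that every $k$-subset with positive codegree in $\ma{H}'$ actually exceeds $C$, whence $\ma{H}'$ is forced to be small and $C_{2t}(n, r) = O(n^k)$ follows.
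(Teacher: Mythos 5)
Your first two steps are sound: the reduction via Fact~\ref{fact-1} to the property that the union of any $t+1$ distinct edges contains at most one further edge is correct, and you rightly identify that covering a single edge $E$ by $t+1$ others produces only the one permitted ``extra.'' The genuine gap is in your proposed remedy. Regularizing codegrees at level $k+1$ only guarantees, for each single vertex $v$ outside a common $k$-set $T_0\subseteq E_1\cap E_2$, many edges containing $T_0\cup\{v\}$; each such edge is therefore forced to cover only \emph{one} prescribed outer vertex. But $(E_1\cup E_2)\setminus T_0$ has $2(r-k)=2tk$ vertices (when $r=(t+1)k$), so $t+1$ edges through $T_0$ would each have to absorb roughly $2k$ prescribed outer vertices, and nothing in the codegree regularization provides edges containing $T_0$ together with a prescribed $2k$-set --- that would require regularizing at level about $3k$, which discards $\Theta(n^{3k})$ edges and destroys the $O(n^k)$ bound. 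So the ``catch a second extra inside a union of $t+1$ edges'' strategy cannot be closed as described.

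The paper's proof avoids this entirely by not restricting itself to unions of $t+1$ edges: it uses the full strength of $2t$-cancellativity with $2t$ ambient edges. After reducing to $(t+1)\mid r$ (by padding with dummy vertices) and applying Lemma~\ref{naivelemma2} with just $s=2$, every $k$-subset of an edge has codegree $0$ or at least $3$. Hence an edge $C$ and a $k$-subset $X\subseteq C$ yield a second edge $D\supseteq X$; partition $C\setminus X$ and $D\setminus X$ into $t$ disjoint $k$-sets each and pick edges $A_i\supseteq X_i$, $B_i\supseteq Y_i$ avoiding $C,D$. Then $\bigl(\cup_i A_i\cup\cup_i B_i\bigr)\cup C=\bigl(\cup_i A_i\cup\cup_i B_i\bigr)\cup D$, violating $2t$-cancellativity (with Fact~\ref{fact-1} handling possible coincidences among the $A_i,B_i$). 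The point you are missing is that the two edges $C,D$ are made to overlap in a $k$-set from the start, so each needs only $t$ covering edges for its remaining $tk$ vertices, and $2t$ covering edges in total is exactly what $2t$-cancellativity tolerates.
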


\begin{proof}
It is sufficient to prove the lemma under the assumption that $(t+1)\mid r$. Indeed, assuming that $C_{2t}(n,(t+1)k)=O(n^k)$, then for $r$ not divisible by $t+1$ write  $r=(t_1+1)(k-1)+y$, with  $1\le y\le t_1$ an integer. Then,
$$C_{2t}(n,r)\le C_{2t}(n+t+1-y,r+t+1-y)=C_{2t}(n+t+1-y,(t+1)k)=O(n^{k})=O(n^{\lceil\frac{r}{t+1}\rceil}),$$ as needed.

Next we   show that $C_{2t}(n,(t+1)k)=O(n^k)$. Let $\mathcal{H}$ be any $2t$-cancellative $(t+1)k$-graph defined on $n$ vertices. We proceed to show that $|\mathcal{H}|\le 2\binom{n}{k}+2t+1$. Assume towards  contradiction that $|\mathcal{H}|\ge 2\binom{n}{k}+2t+2$, then  by applying Lemma \ref{naivelemma2} with $s=2$  there exists a subhypergraph $\mathcal{H}'\subseteq\mathcal{H}$ with at least  $|\mathcal{H}'|\ge 2t+2$ edges, such that no $k$-subset has   codegree equal to one or two.

Let $C\in \ma{H}'$ be an arbitrary edge and $X\s C$ an arbitrary $k$-subset. By the codegree assertion, there exist an edge $D\neq C$ that contains $X$.
Let $X_1,\ldots, X_t$ and $Y_1, \ldots, Y_t$  be  arbitrary partitions of $C\backslash X$ and $D\backslash X$ respectively, to pairwise disjoint $k$-subsets.
Again, by the codegree assertion, for each $i$ there exist edges $A_i,B_i \in \ma{H}'\backslash\{C,D\}$  such that
$X_i\s A_i$ and $Y_i\s B_i$. Consequently,
$$A_1\cup\cdots\cup A_{t}\cup B_1\cdots\cup B_{t}\cup C=A_1\cup\cdots\cup A_{t}\cup B_1\cdots\cup B_{t}\cup D,$$
and we arrive at a contradiction to the fact that $\ma{H}$ is  $2t$-cancellative   combined with  Fact \ref{fact-1}.\end{proof}


\begin{proof}[\textbf{Proof of Theorem \ref{theorem-canc}, the upper bound}] For even $t$, the result follows from  Lemma \ref{last}. For odd $t$
$$C_{t}(n,r)\leq C_{t-1}(n,r)=
O(n^{\lceil \frac{r}{\frac{t-1}{2}+1}\rceil })=
O(n^{\lceil \frac{r}{\lfloor t/2\rfloor +1}\rceil }).$$  \end{proof}

\section*{Acknowledgements}

\noindent The research of Chong Shangguan and Itzhak Tamo was supported by ISF grant No. 1030/15 and NSF-BSF grant No. 2015814.






{\small
\bibliographystyle{plain}
\bibliography{sparse-1}
}
 \end{document}